\newcommand{\bcen}{\begin{center}}
\newcommand{\ecen}{\end{center}}
\newtheorem{theorem}{Theorem}[section]
\newtheorem{lemma}[theorem]{Lemma}
\newtheorem{remark}[theorem]{Remark}
\begin{document}
\setcounter{page}{1}
\title{A stability result for translating space-like graphs in Lorentz manifolds}
\author{Ya Gao, ~~~~~Jing Mao$^{\dag}$,~~~~~Chuanxi Wu}

\date{}
\protect \footnotetext{\!\!\!\!\!\!\!\!\!\!\!\!{~$^{\dag}$Corresponding author}\\
{MSC 2020:
53C20, 53C42.}\\
{Key Words:  Mean curvature flow, space-like graphs, translating
space-like graphs, maximal space-like graphs, constant mean
curvature,  Lorentz manifolds. } }
\maketitle ~~~\\[-15mm]

\begin{center}
{\footnotesize Faculty of Mathematics and Statistics,\\
 Key Laboratory of Applied
Mathematics of Hubei Province, \\
Hubei University, Wuhan 430062, China\\
 Email:  jiner120@163.com  }
\end{center}

%\\[5mm]

\begin{abstract}
In this paper, we investigate space-like graphs defined over a
domain $\Omega\subset M^{n}$ in the Lorentz manifold
$M^{n}\times\mathbb{R}$ with the metric $-ds^{2}+\sigma$, where
$M^{n}$ is a complete Riemannian $n$-manifold with the metric
$\sigma$, $\Omega$ has piecewise smooth boundary, and $\mathbb{R}$
denotes the Euclidean $1$-space. We can prove an interesting
stability result for translating space-like graphs in
$M^{n}\times\mathbb{R}$ under a conformal transformation.
 \end{abstract}

\markright{\sl\hfill Y. Gao, J. Mao, C.-X. Wu\hfill}

\section{Introduction}
\renewcommand{\thesection}{\arabic{section}}
\renewcommand{\theequation}{\thesection.\arabic{equation}}
\setcounter{equation}{0}

Recent years, the study of submanifolds of constant curvature in
product manifolds attracts many geometers' attention. For instance,
Hopf in 1955 discovered that the complexification of the traceless
part of the second fundamental form of an immersed surface
$\mathcal{U}^{2}$, with CMC $H$, in $\mathbb{R}^3$ is a holomorphic
quadratic differential $Q$ on $\mathcal{U}^{2}$, and then he used
this observation to get his well-known conclusion that any immersed
CMC sphere $\mathbb{S}^{2}\hookrightarrow\mathbb{R}^3$ is a standard
distance sphere with radius $1/H$. By introducing a generalized
quadratic differential $\widetilde{Q}$ for immersed surfaces
$\mathcal{U}^{2}$ in product spaces $\mathbb{S}^{2}\times\mathbb{R}$
and $\mathbb{H}^{2}\times\mathbb{R}$, with $\mathbb{S}^{2}$,
$\mathbb{H}^{2}$ the $2$-dimensional sphere and hyperbolic surface
respectively, Abresch and Rosenberg \cite{ar} can extend Hopf's
result to CMC spheres in these target spaces. Meeks and Rosenberg
\cite{mr} successfully classified stable properly embedded
orientable minimal surfaces in the product space
$N\times\mathbb{R}$, where $N$ is a closed orientable Riemannian
surface. In fact, they proved that such a surface must be a product
of a stable embedded geodesic on $N$ with $\mathbb{R}$, a minimal
graph over a region of $N$ bounded by stable geodesics,
$N\times\{t\}$ for some $t\in\mathbb{R}$, or is in a moduli space of
periodic multigraphs parameterized by $P\times\mathbb{R}^{+}$, where
$P$ is the set of primitive (non-multiple) homology classes in
$H_{1}(N)$. Mazet, Rodr\'{\i}guez and Rosenberg \cite{lmh} analyzed
properties of periodic minimal or CMC surfaces in the product
manifold $\mathbb{H}^{2}\times\mathbb{R}$, and they also construct
examples of periodic minimal surfaces in
$\mathbb{H}^{2}\times\mathbb{R}$. In \cite{hfj}, Rosenberg, Schulze
and Spruck showed that a properly immersed minimal hypersurface in
$N\times\mathbb{R}^{+}$ equals some slice $N\times\{c\}$ when $N$ is
a complete, recurrent $n$-dimensional Riemannian manifold with
bounded curvature. Very recently, Gao, Mao and Song \cite{gms}
proved the existence and uniqueness of solutions to the CMC equation
with nonzero Neumann boundary data in product manifold
$N^{n}\times\mathbb{R}$, where $N^{n}$ is an $n$-dimensional
($n\geq2$) complete Riemannian manifold with nonnegative Ricci
curvature. Equivalently, this conclusion gives the existence of CMC
graphic hypersurfaces defined over a compact strictly convex domain
$\Omega\subset N^{n}$ and having nonvanishing contact angle. Of
course, for more information, readers can check references therein
of these papers. Hence, it is interesting and important to consider
submanifolds of constant curvature in the product manifold of type
$N^{n}\times\mathbb{R}$.

Inspired by Shahriyari's progress on complete translating graphs in
$\mathbb{R}^{3}$ (see \cite{ls} for details) and the Jenkins-Serrin
theory on minimal graphs and CMC graphs, Zhou \cite{hyz} considered
complete translating, minimal and CMC graphs in $3$-dimensional
product manifold $N^{2}\times\mathbb{R}$ over a domain
$\Omega\subset N^{2}$, where $N^{2}$ is a complete Riemannian
surface, and successfully showed the boundary behavior of $\Omega$.
This conclusion extends some of Shahriyari's conclusions in
\cite{ls} from the Euclidean $3$-space $\mathbb{R}^{3}$ to the
setting of $3$-dimensional product space $N^{2}\times\mathbb{R}$.

\emph{Stability} plays an important role in the study of minimal or
CMC hypersurfaces in Euclidean space or, more generally, product
manifolds. For instance, if stability assumption was made, nice
curvature estimates or classification results for minimal or CMC
surfaces can be obtained -- see, e.g., \cite{cm1,cm2,mr,rs,srz,hyz}.

The famous Bernstein theorem (holds only for $n\leq7$) in the
Euclidean space says that the entire nonparametric minimal
hypersurfaces in $\mathbb{R}^{n+1}$, $n\leq7$, are hyperplanes (see
\cite{ssy}). Calabi \cite{ec} (for $n\leq4$), Cheng-Yau \cite{cy}
(for all $n$) proved that a complete maximal spacelike hypersurface
in the flat Lorentz-Minkowski $(n+1)$-space
$\mathbb{L}^{n+1}\equiv\mathbb{R}^{n}_{1}$ is totally geodesic.
Therefore, specially, the only entire nonparametric maximal
space-like hypersurfaces in $\mathbb{R}^{n}_{1}$
 are space-like hyperplanes. This interesting example shows that it
 is meaningful to ask whether classical results in Riemannian
 geometry (or specially the Euclidean space) can be transplanted
 to pesudo-Riemannian geometry (or specially the pesudo-Euclidean
 space) or not. This example also shows that, in some aspect, there
 exists essential difference between the Euclidean space and the
pesudo-Euclidean
 space.

Motivated by the previous experience, we try to get stability
conclusions in Lorentz manifolds of type $M^{n}\times\mathbb{R}$.
Fortunately, so far, we get one -- see Theorem \ref{THEOREM3.1} for
details. In order to state our conclusion clearly, we need to
introduce some notions first.

Throughout this paper, denote by $M^{n}\times\mathbb{R}$, with the
metric $-ds^{2}+\sigma$, an $(n+1)$-dimensional ($n\geq2$) Lorentz
manifold
 where $M^{n}$ is a complete Riemannian $n$-manifold with the metric
 $\sigma$. For a domain $\Omega\subset M^{n}$ with piecewise smooth boundary, a \emph{translating space-like graph} in the Lorentz
 $(n+1)$-manifold $M^{n}\times\mathbb{R}$ is the space-like graph of
 $u(x)$, where $u(x):\Omega\rightarrow\mathbb{R}$ is a solution of
 the following mean curvature type equation
\begin{eqnarray} \label{trg}
\mathrm{div}\left(\frac{Du}{\sqrt{1-|Du|^{2}}}\right)=\frac{c}{\sqrt{1-|Du|^{2}}},
\end{eqnarray}
where $D$ is a covariant derivative operator on $M^{n}$,
$\mathrm{div}(\cdot)$ denotes the divergence operator, and $c$ is a
constant. Translating space-like graphs by mean curvature flow (MCF
for short) in the Lorentz manifold $M^{n}\times\mathbb{R}$ are
translating surfaces that can be viewed as a space-like graph of a
function over a domain. In fact, let $\{x,u(x)\}$ be a space-like
graphic surface defined over $\Omega\subset M^{n}$ in the Lorentz
manifold $M^{n}\times\mathbb{R}$, and then, since the mean curvature
of the space-like surface is (see \cite[Sect. 1]{chmx} in Section
\ref{sect3} here for this calculation)
\begin{eqnarray*}
H=\mathrm{div}\left(\frac{Du}{\sqrt{1-|Du|^{2}}}\right),
\end{eqnarray*}
the graph of $u$ is a vertically translating space-like with
constant speed $c$ if and only if $u$ is a solution to the equation
(\ref{trg}). Recently, Mao and his collaborators \cite{chmx} showed
that along the nonparametric MCF with prescribed contact angle
boundary condition in the Lorentz $3$-manifold
$M^{2}\times\mathbb{R}$, if $M^{2}$ has nonnegative Gaussian
curvature, then
 the evolution of space-like graphs over
compact strictly convex domains in $M^{2}$ exists for all the time
and solutions of the flow converge to ones moving only by
translation. Translating solutions play an important role in the
study of type-II singularities of the MCF. For instance, Angenent
and Vel\'{a}zquez \cite{av1,av2} gave some examples of convergence
which implies that type-II singularities of the MCF there are
modeled by translating surfaces.

Denote by $\widetilde{M^{n}\times\mathbb{R}}$ the
$(n+1)$-dimensional pseudo-Riemannian manifold
\begin{eqnarray*}
\{(x,s)|x\in M^{n},s\in\mathbb{R}\}
\end{eqnarray*}
equipped with the weighted metric
$e^{cs}(-ds^{2}+\sigma_{ij}dx^{i}dx^{j})$. Clearly,
$\widetilde{M^{n}\times\mathbb{R}}$ can be achieved by the Lorentz
 $(n+1)$-manifold $M^{n}\times\mathbb{R}$ with a conformal
 transformation to its Lorentzian metric. Here, we have used Einstein summation convention,
 that is, summation should be done to repeated subscripts and
 superscripts. In the sequel, without specification, Einstein summation
 convention will be always used. We can prove a stability result for
 translating space-like graphs as follows:

\begin{theorem} \label{THEOREM3.1}
Assume that $u(x)$ is a solution to (\ref{trg}). Then
$\Sigma=\{x,u(x))|x\in\Omega\}$ is a stable, maximal space-like
graph in $\widetilde{M^{n}\times\mathbb{R}}$.
\end{theorem}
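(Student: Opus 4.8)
The plan is to establish the theorem in two steps. Since a conformal change of metric preserves the causal character of vectors, $\Sigma$ is automatically space-like — and visibly still a graph — in $\widetilde{M^{n}\times\mathbb{R}}$; so it remains to show (i) that $\Sigma$ is \emph{maximal}, i.e.\ has vanishing mean curvature $\widetilde H$ with respect to $\widetilde g=e^{cs}(-ds^{2}+\sigma)$, and (ii) that $\Sigma$ is stable.

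For (i) I would compute $\widetilde H$ for the graph $s=u(x)$ directly, or, more efficiently, recognize it through the first variation of the induced area. A short calculation with the graph parametrization shows that, for a space-like graph $v:\Omega\to\mathbb{R}$, the volume element induced by $\widetilde g$ equals $e^{\mu v}\sqrt{1-|Dv|^{2}}\,dV_\sigma$ for a nonzero constant $\mu$ depending only on $c$ and $n$ (with $\mathrm{sgn}\,\mu=\mathrm{sgn}\,c$), so that
\[
\mathcal{A}(v)=\int_{\Omega}e^{\mu v}\sqrt{1-|Dv|^{2}}\;dV_\sigma .
\]
Working out its Euler--Lagrange equation and simplifying the gradient terms (using $|Dv|^{2}+(1-|Dv|^{2})=1$) reduces it to $\mathrm{div}\!\bigl(Dv/\sqrt{1-|Dv|^{2}}\bigr)=\lambda/\sqrt{1-|Dv|^{2}}$ with the appropriate constant $\lambda$, which is exactly equation (\ref{trg}). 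Hence $u$ is a critical point of $\mathcal{A}$, i.e.\ $\widetilde H\equiv 0$, so $\Sigma$ is a maximal space-like graph in $\widetilde{M^{n}\times\mathbb{R}}$. The only delicate point here is bookkeeping the conformal transformation law of the mean curvature in Lorentzian signature, where the timelike unit normal introduces sign changes relative to the familiar Riemannian formula.

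For (ii) I would exploit the special form of $\mathcal{A}$. Put $w=|\mu|^{-1}e^{\mu v}>0$; this is a diffeomorphism from admissible graphs onto the ``forward cone'' $\{\,|Dw|<|\mu|\,w\,\}$, and a one-line substitution ($Dw=\mathrm{sgn}(\mu)\,e^{\mu v}Dv$) turns the functional into
\[
\mathcal{A}=\int_{\Omega}\sqrt{\mu^{2}w^{2}-|Dw|^{2}}\;dV_\sigma .
\]
The integrand is the square root of a Lorentzian quadratic form in $(w,Dw)$ restricted to its forward cone, and such a function is concave: diagonalizing its Hessian by completing the square in the $w$-variable reduces negative semi-definiteness to the Cauchy--Schwarz inequality for $|Dw|$ (the infinitesimal form of the reverse triangle inequality in Minkowski space). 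Therefore $\mathcal{A}$ is concave in $w$ over the convex set of admissible functions with prescribed boundary values, so the critical point corresponding to $u$ is a global maximizer; in particular its second variation is non-positive for every compactly supported variation, and since vertical variations $\varphi\,\partial_s$ already realize every normal component along $\Sigma$, this is precisely stability. Notably, this argument needs no curvature hypothesis on $M^{n}$, consistent with the hypothesis-free statement of the theorem.

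I expect the main obstacle to be organizational rather than conceptual: carefully deriving the mean curvature of a space-like graph under the weighted Lorentzian metric and matching normalizations so that the Euler--Lagrange equation is literally (\ref{trg}), together with the standard but sign-sensitive verification that $\sqrt{\mu^{2}w^{2}-|Dw|^{2}}$ is concave on the forward cone. The remaining ingredients — invariance of the space-like condition under a conformal change, the change of variables $v\leftrightarrow w$, and the reduction from vertical to general normal variations — are routine.
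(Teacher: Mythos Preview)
Your approach is correct and genuinely different from the paper's. The paper never works in graph coordinates for the variational part: it computes the first and second variation of the weighted area $F(\Sigma)=\int_\Sigma e^{cs}\,d\mu$ under normal variations $\phi\vec v$, obtains the stability operator
\[
L\phi=\Delta\phi-\bigl(|A|^2+\overline{\mathrm{Ric}}(\vec v,\vec v)\bigr)\phi+c\langle\nabla\phi,\partial_s\rangle,
\]
and then proves a Jacobi-type identity (their Lemma~3.2) showing that the angle function $\Theta=\langle\vec v,\partial_s\rangle$ satisfies $L\Theta=0$. Writing $\phi=\eta\Theta$ and integrating by parts yields
$\partial_r^2F|_{r=0}=-\int_\Sigma\Theta^2|\nabla\eta|^2 e^{cs}\,d\mu\le 0$. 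So their key lemma is the existence of a positive (well, sign-definite) Jacobi field, and stability follows by the usual Fischer-Colbrie--Schoen mechanism.

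Your route bypasses the Jacobi operator entirely: the substitution $w=|\mu|^{-1}e^{\mu v}$ linearizes the weight and turns the functional into $\int_\Omega\sqrt{\mu^2w^2-|Dw|^2}\,dV_\sigma$, whose integrand is the Lorentzian norm on the forward cone and hence concave. This is more elementary (no Codazzi equation, no ambient Ricci term, no second fundamental form) and actually yields more: global area-maximality among space-like graphs with the same boundary data, not merely nonpositive second variation. The paper's approach, on the other hand, produces the explicit stability operator and the identity $L\Theta=0$, which they reuse to prove their rigidity Theorem~3.3; your concavity argument would not give that by-product.

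One bookkeeping point to tighten: with the conformal factor $e^{cs}$ as written, the induced $n$-volume on the graph is $e^{ncv/2}\sqrt{1-|Dv|^2}\,dV_\sigma$, so $\mu=nc/2$, and the Euler--Lagrange equation comes out as $\mathrm{div}\bigl(Dv/\sqrt{1-|Dv|^2}\bigr)=-\mu/\sqrt{1-|Dv|^2}$; matching this literally to (\ref{trg}) requires reconciling the constant. The paper sidesteps this by declaring $F(\Sigma)=\int_\Sigma e^{cs}d\mu$ to be ``the area functional'' (effectively an Ilmanen-type weight rather than the true conformal area), so the same normalization ambiguity is present there. Since the statement is for an arbitrary constant $c$, this does not affect correctness, but you should flag which functional you are extremizing.
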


The paper is organized as follows. In Section \ref{sect2}, some
useful formulas for space-like hypersurfaces in a Lorentz manifold
will be recalled. The proof of Theorem \ref{THEOREM3.1} will be
given in Section \ref{sect3}. Meanwhile, as a byproduct, a
convergence result related to maximal, CMC or translating space-like
graphs in Lorentz manifolds will also be shown. In Section
\ref{sect4}, examples related to the existence of translating
space-like graphs in the Lorentz
 $(n+1)$-manifold $M^{n}\times\mathbb{R}$ will be introduced.

\section{Geometry of space-like hypersurfaces in a Lorentz manifold}
\renewcommand{\thesection}{\arabic{section}}
\renewcommand{\theequation}{\thesection.\arabic{equation}}
\setcounter{equation}{0} \label{sect2}

Given an $(n+1)$-dimensional Lorentz manifold
$\left(\overline{M}^{n+1},\overline{g}\right)$, with the metric
$\overline{g}$, and its space-like hypersurface $M^{n}$. For any
$p\in M^{n}$, one can choose a local  Lorentzian orthonormal frame
field $\{e_{0},e_{1},e_{2},\ldots,e_{n}\}$ around $p$ such that,
restricted to $M^{n}$, $e_{1},e_{2},\ldots,e_{n}$ form orthonormal
frames tangent to $M^{n}$. Taking the dual coframe field
$\{w_{0},w_{1},w_{2},\ldots,w_{n}\}$ such that the Lorentzian metric
$\overline{g}$ can be written as
$\overline{g}=-w_{0}^{2}+\sum_{i=1}^{n}w_{i}^{2}$. Making the
convention on the range of indices
\begin{eqnarray*}
0\leq\alpha,\beta,\gamma,\ldots\leq n; \qquad\qquad 1\leq
i,j,k\ldots\leq n,
\end{eqnarray*}
and doing differentials to forms $w_{\alpha}$, one can easily get
the following structure equations
\begin{eqnarray}
&&(\mathrm{Gauss~ equation})\qquad \qquad R_{ijkl}=\overline{R}_{ijkl}-(h_{ik}h_{jl}-h_{il}h_{jk}), \label{Gauss}\\
&&(\mathrm{Codazzi~ equation})\qquad \qquad h_{ij,k}-h_{ik,j}=\overline{R}_{0ijk},  \label{Codazzi}\\
&&(\mathrm{Ricci~ identity})\qquad \qquad
h_{ij,kl}-h_{ij,lk}=\sum\limits_{m=1}^{n}h_{mj}R_{mikl}+\sum\limits_{m=1}^{n}h_{im}R_{mjkl},
\label{Ricci}
\end{eqnarray}
and the Laplacian of the second fundamental form $h_{ij}$ of $M^{n}$
as follows
\begin{eqnarray} \label{LF}
&&\Delta
h_{ij}=\sum\limits_{k=1}^{n}\left(h_{kk,ij}+\overline{R}_{0kik,j}+\overline{R}_{0ijk,k}\right)+
\sum\limits_{k=1}^{n}\left(h_{kk}\overline{R}_{0ij0}+h_{ij}\overline{R}_{0k0k}\right)+\nonumber\\
&&\qquad\qquad
\sum\limits_{m,k=1}^{n}\left(h_{mj}\overline{R}_{mkik}+2h_{mk}\overline{R}_{mijk}+h_{mi}\overline{R}_{mkjk}\right)\nonumber\\
&&\qquad\quad
-\sum\limits_{m,k=1}^{n}\left(h_{mi}h_{mj}h_{kk}+h_{km}h_{mj}h_{ik}-h_{km}h_{mk}h_{ij}-h_{mi}h_{mk}h_{kj}\right),
\end{eqnarray}
where $R$ and $\overline{R}$ are the curvature tensors of $M^{n}$
and $\overline{M}^{n+1}$ respectively, $A:=h_{ij}w_{i}w_{j}$ is the
second fundamental form with $h_{ij}$ the coefficient components of
the tensor $A$, $\Delta$ is the Laplacian on the hypersurface
$M^{n}$, and, as usual, the comma ``," in subscript of a given
tensor means doing covariant derivatives -- this convention will
also be
 used in the sequel. For detailed derivation of the above
formulae, we refer readers to, e.g., \cite[Section 2]{hzl}.

Clearly, in our setting here, all formulas mentioned in this section
can be used directly with
$\overline{M}^{n+1}=M^{n}\times\mathbb{R}$.

\section{Stability}
\renewcommand{\thesection}{\arabic{section}}
\renewcommand{\theequation}{\thesection.\arabic{equation}}
\setcounter{equation}{0} \label{sect3}

Similar to the calculation in \cite[Sect. 1]{chmx}, for the
space-like graph $\Sigma=\{(x,u(x))|x\in\Omega\}$, defined over
$\Omega\subset M^{n}$, in the Lorentz $(n+1)$-manifold
$M^{n}\times\mathbb{R}$ with the metric
$\overline{g}:=\sigma_{ij}dw^{i}\otimes dw^{j}-ds\otimes ds$,
tangent vectors are given by
\begin{eqnarray*}
X_{i}=\partial_{i}+D_{i}u\partial_{s}, \qquad i=1,2,\ldots,n,
\end{eqnarray*}
and the corresponding upward unit normal vector is given by
\begin{eqnarray*}
\vec{v}=\frac{1}{\sqrt{1-|Du|^2}}\left(\partial_s+D^{j}
u\partial_{j}\right),
\end{eqnarray*}
where $D^{j}u=\sigma^{ij}D_{i}u$. Denote by $\overline{\nabla}$ the
gradient operator on $M^{n}\times\mathbb{R}$, and then the second
fundamental form $h_{ij}dw^{i}\otimes dw^{j}$ of $\Sigma$ is given
by
\begin{eqnarray*}
h_{ij}=-\langle\overline{\nabla}_{X_i}
X_j,\vec{v}\rangle=\frac{1}{\sqrt{1-|Du|^2}}D_{i}D_{j}u.
\end{eqnarray*}
Moreover, the scalar mean curvature of $\Sigma$ is
\begin{eqnarray} \label{hf}
\qquad
H=\sum_{i=1}^{n}h^i_i=\frac{1}{\sqrt{1-|Du|^2}}\left(\sum_{i,k=1}^{n}g^{ik}D_{k}D_{i}u\right)&=&
\frac{\sum_{i,k=1}^{n}\left(\sigma^{ik}+\frac{D^{i}uD^{k}u}{1-|Du|^{2}}\right)D_{k}D_{i}u}{\sqrt{1-|Du|^2}}\nonumber\\
&=&\mathrm{div}\left(\frac{Du}{\sqrt{1-|Du|^{2}}}\right),
\end{eqnarray}
where $g^{ik}$ is the inverse of the induced Riemannian metric $g$
on the space-like graph $\Sigma$.
 Denote by $\Theta$ the angle function of $\Sigma$, and then
using (\ref{trg}), the above equality can be written equivalently as
\begin{eqnarray} \label{c-1}
H=-c\Theta=-c\langle\vec{v},\partial_{s}\rangle.
\end{eqnarray}

\begin{proof} [Proof of Theorem \ref{THEOREM3.1}]
The area functional of $\widetilde{M^{n}\times\mathbb{R}}$ is given
by
\begin{eqnarray*}
F(\Sigma)=\int_{\Sigma}e^{cs}d\mu,
\end{eqnarray*}
where $d\mu$ is the volume element of $\Sigma$ induced by the metric
$g$ of the Lorentz $(n+1)$-manifold $M^{n}\times\mathbb{R}$. Let
$\Sigma_{r}$ be a family of surfaces satisfying
\begin{eqnarray} \label{cf}
\frac{\partial\Sigma_{r}}{\partial
r}\Bigg{|}_{r=0}=\phi\vec{v}~~~~\mathrm{with}~~~~\Sigma_{0}=\Sigma,
\end{eqnarray}
where $\phi(x)$ is a smooth function defined on $\Sigma$ with
compact support. Treating $\Sigma_{r}$ as a curvature flow of
$\Sigma$ in the Lorentz $(n+1)$-manifold $M^{n}\times\mathbb{R}$,
and by direct calculation, it follows that:

\begin{lemma} \label{lemma3.1}
Along the curvature flow (\ref{cf}), we have
\begin{eqnarray} \label{c-2}
\begin{split}
&\frac{\partial\vec{v}}{\partial r}\Bigg{|}_{r=0}=\nabla\phi,\\
&\frac{\partial H}{\partial
r}\Bigg{|}_{r=0}=\Delta\phi-\left(|A|^{2}+\overline{\mathrm{Ric}}(\vec{v},\vec{v})\right)\phi,
\end{split}
\end{eqnarray}
where, following the convention used in Section \ref{sect2},
$\nabla$ and $\Delta$ denote the covariant derivative and the
Laplacian of $\Sigma$ respectively, and
$\overline{\mathrm{Ric}}(\cdot,\cdot)$ stands for the Ricci tensor
of the ambient space $M^{n}\times\mathbb{R}$.
\end{lemma}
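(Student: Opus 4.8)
\textbf{Proof proposal for Lemma \ref{lemma3.1}.}

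The plan is to carry out a standard first-variation computation for the normal flow (\ref{cf}), treating it as a smooth one-parameter family of space-like immersions and differentiating the geometric data at $r=0$. First I would record the variation of the induced metric: since $\partial_r\Sigma_r = \phi\vec v$ along the flow, a direct computation using $\langle X_i,\vec v\rangle=0$ and $\langle\vec v,\vec v\rangle=-1$ gives $\partial_r g_{ij} = -2\phi h_{ij}$ (the sign reflecting the Lorentzian normalization $\langle\vec v,\vec v\rangle=-1$ rather than $+1$), and hence $\partial_r g^{ij} = 2\phi h^{ij}$. Next I would differentiate the relation $\langle\vec v,\vec v\rangle=-1$ and $\langle\vec v, X_i\rangle=0$: the first forces $\partial_r\vec v$ to be tangent to $\Sigma$, and pairing the derivative of the second with $X_j$ yields $\langle\partial_r\vec v, X_j\rangle = \langle\vec v,\partial_r X_j\rangle = \langle\vec v,\nabla_{X_j}(\phi\vec v)\rangle = X_j(\phi)$, where I use that $\partial_r$ and $\partial_{x^j}$ commute and the ambient metric is parallel. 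Therefore $\partial_r\vec v|_{r=0} = \nabla\phi$, the intrinsic gradient of $\phi$ on $\Sigma$, which is the first asserted identity.

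For the second identity I would compute $\partial_r H$ where $H = g^{ij}h_{ij}$. Differentiating $h_{ij} = -\langle\overline\nabla_{X_i}X_j,\vec v\rangle$ and using the product rule gives two contributions: one from $\partial_r g^{ij}$, namely $2\phi h^{ij}h_{ij} = 2\phi|A|^2$, and one from $\partial_r h_{ij}$. For the latter I would use $h_{ij} = \langle\overline\nabla_{X_i}\vec v, X_j\rangle$, differentiate, commute $\partial_r$ with $\partial_{x^i}$, substitute $\partial_r\vec v = \nabla\phi$ and $\partial_r X_j = \overline\nabla_{X_j}(\phi\vec v) = X_j(\phi)\vec v + \phi\,\overline\nabla_{X_j}\vec v$, and invoke the definition of the ambient curvature tensor $\overline R$ to trade the resulting second-order term in $\phi$ for a Hessian plus a curvature term. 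Tracing with $g^{ij}$, the Hessian term becomes $\Delta\phi$, the pure second-fundamental-form terms combine with the metric-variation contribution to $-|A|^2\phi$ (the cross terms cancelling against the $2\phi|A|^2$ from $\partial_r g^{ij}$), and the curvature term contracts to $-\overline{\mathrm{Ric}}(\vec v,\vec v)\phi$. This is the Lorentzian analogue of the classical Jacobi-operator formula $\partial_r H = \Delta\phi + (|A|^2 + \overline{\mathrm{Ric}}(\vec v,\vec v))\phi$ in the Riemannian case; here the signature of the normal direction flips the sign in front of $\overline{\mathrm{Ric}}$ but not in front of $|A|^2$, yielding exactly $\Delta\phi - (|A|^2 + \overline{\mathrm{Ric}}(\vec v,\vec v))\phi$ as stated.

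The main obstacle is purely a matter of bookkeeping the sign conventions consistently: because $\vec v$ is time-like with $\langle\vec v,\vec v\rangle=-1$, every place where the Riemannian computation uses $\langle\vec v,\vec v\rangle=+1$ must be re-examined, and the same applies to the sign in the Gauss-Weingarten relations and in the curvature term extracted via (\ref{Gauss})–(\ref{Codazzi}). I would pin down these signs once and for all from the formulas $h_{ij} = -\langle\overline\nabla_{X_i}X_j,\vec v\rangle = \langle\overline\nabla_{X_i}\vec v,X_j\rangle$ and the structure equations already recorded in Section \ref{sect2}, then propagate them mechanically. No genuinely new idea is needed beyond commuting $\partial_r$ with spatial derivatives and one application of the definition of $\overline R$; the result then feeds directly into the second variation of $F(\Sigma) = \int_\Sigma e^{cs}\,d\mu$, where combining Lemma \ref{lemma3.1} with (\ref{c-1}) and an integration by parts will exhibit the stability of $\Sigma$ as a maximal space-like graph in $\widetilde{M^n\times\mathbb{R}}$.
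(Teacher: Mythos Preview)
Your approach is essentially the same as the paper's: both compute $\partial_r\vec v$, $\partial_r g_{ij}$, and $\partial_r h_{ij}$ directly in a local frame and then trace, with the paper invoking the structure equations of Section~\ref{sect2} to extract the $\overline{\mathrm{Ric}}(\vec v,\vec v)$ term while you commute $\partial_r$ with $\overline\nabla_{X_i}$ and read it off from the definition of $\overline R$.

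One slip worth flagging: your intermediate formula $\partial_r g_{ij}=-2\phi h_{ij}$ has the wrong sign, and the Lorentzian normalization is \emph{not} what causes any sign change at this step. The computation
\[
\partial_r g_{ij}=2\langle\overline\nabla_{X_i}(\phi\vec v),X_j\rangle
=2\phi\langle\overline\nabla_{X_i}\vec v,X_j\rangle
=-2\phi\langle\vec v,\overline\nabla_{X_i}X_j\rangle
=2\phi h_{ij}
\]
uses only $\langle\vec v,X_j\rangle=0$ and the paper's definition $h_{ij}=-\langle\overline\nabla_{X_i}X_j,\vec v\rangle$; the value of $\langle\vec v,\vec v\rangle$ never enters. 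The paper obtains exactly this sign, so the metric-variation contribution to $\partial_rH$ is $-2\phi|A|^2$, which combines with $+\phi|A|^2$ coming from $g^{ij}\partial_rh_{ij}$ to give $-\phi|A|^2$. The Lorentzian signature genuinely enters only through the Gauss decomposition $\overline\nabla_{X_i}X_j=\nabla_{X_i}X_j+h_{ij}\vec v$ and the curvature commutator, and tracking those correctly yields the stated formula. Your own caveat about careful sign bookkeeping already anticipates this; with that correction the plan is sound and matches the paper.
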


\begin{proof}
First, we have
\begin{eqnarray*}
\begin{split}
\frac{\partial\vec{v}}{\partial r}\Bigg{|}_{r=0}&=\left\langle\frac{\partial\vec{v}}{\partial{r}},(\Sigma_{r})_{,i}\right\rangle g^{ik}(\Sigma_{r})_{,k}\Bigg{|}_{r=0}\\
&=
-\left\langle\vec{v},(\phi\vec{v})_{,i}\right\rangle g^{ik}(\Sigma_{r})_{,k}\Bigg{|}_{r=0}\\
&=\phi_{,i}g^{ik}(\Sigma_{r})_{,k}\Bigg{|}_{r=0}=\nabla\phi,
\end{split}
\end{eqnarray*}
where, following the convention used in Section \ref{sect2},
$(\cdot)_{,k}$ means doing covariant derivative with respect to the
tangent vector $X_{k}$ on the translating space-like graph $\Sigma$.

Second, we have
\begin{eqnarray*}
\begin{split}
\frac{\partial g_{lm}}{\partial r}\Bigg{|}_{r=0}&=\frac{\partial}{\partial r}\left\langle(\Sigma_{r})_{,l},(\Sigma_{r})_{,m}\right\rangle\Bigg{|}_{r=0}\\
&=
2\left\langle(\phi\vec{v})_{,l},(\Sigma_{r})_{,m}\right\rangle\Bigg{|}_{r=0}\\
&= -2\phi\left\langle\vec{v},(\Sigma_{r})_{,lm}\right\rangle\Bigg{|}_{r=0}=2\phi h_{ij},
\end{split}
\end{eqnarray*}
and
\begin{eqnarray*}
\begin{split}
\frac{\partial h_{ij}}{\partial r}\Bigg{|}_{r=0}&=-\frac{\partial}{\partial r}\left\langle\vec{v},(\Sigma_{r})_{,ij}\right\rangle\Bigg{|}_{r=0}\\
&= -\left\langle\phi_{,l}(\Sigma_{r})_{,m}g^{ml},(\Sigma_{r})_{,ij}\right\rangle\Bigg{|}_{r=0}
-\left\langle\vec{v},(\phi\vec{v})_{,ij}\right\rangle\Bigg{|}_{r=0}\\
&=
-\left\langle\phi_{,l}(\Sigma_{r})_{,m}g^{ml},\Gamma_{ij}^{k}(\Sigma_{r})_{,k}+h_{ij}\vec{v}\right\rangle\Bigg{|}_{r=0}-\left\langle\vec{v},\left(\phi_{,j}\vec{v}+\phi h_{jl}g^{lm}(\Sigma_{r})_{,m}\right)_{,i}\right\rangle\Bigg{|}_{r=0}\\
&=
-\Gamma_{ij}^{k}\phi_{,k}+\phi_{,ij}+\phi h_{jl}g^{lm}h_{im}\\
&= \nabla_{i}\nabla_{j}\phi+\phi h_{il}g^{lm}h_{im},
\end{split}
\end{eqnarray*}
where, as usual, $\Gamma^{k}_{ij}$ denote Christoffel symbols
determined by the metric $g$.
 By \eqref{Gauss}, \eqref{Codazzi}, \eqref{LF} and
Simon's identity of $\phi$, we have
\begin{eqnarray*}
g^{ij}\frac{\partial h_{ij}}{\partial
r}\Bigg{|}_{r=0}=\Delta\phi+|A|^{2}\phi-\overline{\mathrm{Ric}}\left(\vec{v},\vec{v}\right)\phi,
\end{eqnarray*}
and then
\begin{eqnarray*}
\begin{split}
\frac{\partial H}{\partial r}\Bigg{|}_{r=0}&=\frac{\partial}{\partial r}\left(g^{ij}h_{ij}\right)\Bigg{|}_{r=0}\\
&=
-g^{il}\frac{\partial g_{lm}}{\partial r}|_{r=0}g^{mj}h_{ij}+g^{ij}\frac{\partial h_{ij}}{\partial r}|_{r=0}\\
&=
-2\phi|A|^{2}+\Delta\phi+|A|^{2}\phi-\overline{\mathrm{Ric}}\left(\vec{v},\vec{v}\right)\phi\\
&=
\Delta\phi-\left(|A|^{2}+\overline{\mathrm{Ric}}\left(\vec{v},\vec{v}\right)\right)\phi.
\end{split}
\end{eqnarray*}
This completes the proof of Lemma \ref{lemma3.1}.
\end{proof}

By \eqref{c-1} and \eqref{c-2}, it is not hard to obtain
\begin{eqnarray} \label{c-3}
\begin{split}
&\frac{\partial F(\Sigma_{r})}{\partial r}\Bigg{|}_{r=0}=\int_{\Sigma}\phi\left(H+c\left\langle\vec{v},\partial_{s}\right\rangle\right)e^{cs}d\mu=0,\\
&\frac{\partial^{2}F(\Sigma_{r})}{\partial^{2}r}\Bigg{|}_{r=0}=\int_{\Sigma}\phi\left[\Delta\phi-\left(|A|^{2}+\overline{\mathrm{Ric}}(\vec{v},\vec{v})\right)\phi+c\langle\nabla\phi,\partial_{s}\rangle\right]e^{cs}d\mu.
\end{split}
\end{eqnarray}
Define an elliptic operator $L$ as follows
\begin{eqnarray} \label{c-4}
L\phi=\Delta\phi-\left(|A|^{2}+\overline{\mathrm{Ric}}(\vec{v},\vec{v})\right)\phi+c\left\langle\nabla\phi,\partial_{s}\right\rangle.
\end{eqnarray}
Therefore, putting (\ref{c-4}) into the second equality of
(\ref{c-3}) yields
\begin{eqnarray} \label{c-5}
\frac{\partial^{2}F(\Sigma_{r})}{\partial^{2}r}\Bigg{|}_{r=0}=\int_{\Sigma}\phi
L\phi e^{cs}d\mu.
\end{eqnarray}
Now, we only need to show that the RHS of (\ref{c-5}) is
non-positive. Since $\Sigma$ is a space-like graph, its angle
function satisfies $\Theta=\langle\vec{v},\partial_{r}\rangle<0$.
Thus we can write $\phi=\eta\Theta$, where $\eta$ is another
function over $\Sigma$ with compact support. Then it follows that
\begin{eqnarray} \label{c-6}
\phi L\phi=\eta\Theta\left(\eta
L\Theta+\Theta\Delta\eta+2\langle\nabla\eta,\nabla\Theta\rangle+c\Theta\langle\nabla\eta,\partial_{s}\rangle\right).
\end{eqnarray}
The reason why we adapt this form is based on a general formula of
$\Delta\Theta$ as follows.

\begin{lemma}\label{LEMMA3.2}
For any $C^{2}$ space-like hypersurface $S$ in the Lorentz
$(n+1)$-manifold $M^{n}\times\mathbb{R}$, it holds that
\begin{eqnarray} \label{c-7}
\Delta\Theta-\left(|A|^{2}+\overline{\mathrm{Ric}}(\vec{v},\vec{v})\right)\Theta-\langle\nabla
H,\partial_{s}\rangle=0,
\end{eqnarray}
where $A$ is the second fundamental form of $S$.
\end{lemma}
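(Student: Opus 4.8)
\textbf{Proof proposal for Lemma \ref{LEMMA3.2}.}
The plan is to compute $\Delta\Theta$ directly from the definition $\Theta=\langle\vec v,\partial_s\rangle$, using the structure equations of Section \ref{sect2} together with the special geometry of the ambient space $M^n\times\mathbb{R}$. First I would record the key fact that $\partial_s$ is a parallel vector field on $M^n\times\mathbb{R}$ with respect to the Lorentzian metric $-ds^2+\sigma$, i.e. $\overline\nabla_X\partial_s=0$ for every $X$. Consequently, for the tangential frame $\{e_i\}$ on the space-like hypersurface $S$, differentiating $\Theta$ once gives $e_i(\Theta)=\langle\overline\nabla_{e_i}\vec v,\partial_s\rangle=-\sum_j h_{ij}\langle e_j,\partial_s\rangle$, where I have used the Weingarten relation $\overline\nabla_{e_i}\vec v=-\sum_j h_{ij}e_j$ (valid because $\vec v$ is time-like unit, so the sign bookkeeping must be handled carefully in the Lorentzian setting). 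Writing $\partial_s^\top=\sum_j\langle e_j,\partial_s\rangle e_j$ for the tangential part of $\partial_s$, this reads $\nabla\Theta=-A(\partial_s^\top)$.

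Next I would differentiate a second time and trace. Taking $\nabla_{e_k}$ of $e_i(\Theta)=-\sum_j h_{ij}\langle e_j,\partial_s\rangle$ and summing over $i=k$ produces, after using the Codazzi equation \eqref{Codazzi} to convert $\sum_i h_{ij,i}$ into $\nabla_j H$ plus a curvature term $\overline R_{0\,\cdot}$, and using that $\overline\nabla_{e_j}\partial_s=0$ so that $e_k\langle e_j,\partial_s\rangle=\langle\overline\nabla_{e_k}e_j,\partial_s\rangle=h_{kj}\langle\vec v,\partial_s\rangle=h_{kj}\Theta$ (again modulo the Lorentzian sign, which must come out consistent with \eqref{c-7}). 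Collecting the terms, the trace gives
\begin{eqnarray*}
\Delta\Theta=-\langle\nabla H,\partial_s\rangle-|A|^2\Theta-\sum_{i,j}h_{ij}\overline R_{0\cdot}\text{-terms},
\end{eqnarray*}
and the curvature contraction is precisely $\overline{\mathrm{Ric}}(\vec v,\vec v)\,\Theta$ once one uses that $\partial_s=-\Theta\,\vec v+\partial_s^\top$ and that, in a product metric $-ds^2+\sigma$, the relevant mixed curvature components $\overline R_{0ij0}$ contract against $h_{ij}$ to reproduce the Ricci term (the purely tangential curvature pieces cancel or are absorbed). This yields \eqref{c-7}.

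I expect the main obstacle to be the sign and normalization bookkeeping intrinsic to the Lorentzian ambient metric: the unit normal $\vec v$ is time-like with $\langle\vec v,\vec v\rangle=-1$, so the Weingarten and Gauss formulas carry signs opposite to the Riemannian case, and the trace identity \eqref{LF} from Section \ref{sect2} must be applied with those conventions in force. One must check that the three separate curvature contributions — from the Ricci identity \eqref{Ricci}, from commuting $e_k$ past $\langle e_j,\partial_s\rangle$, and from the Codazzi term $\overline R_{0ijk}$ — combine into the single clean coefficient $\overline{\mathrm{Ric}}(\vec v,\vec v)$ rather than a more complicated curvature expression; this is where the product structure of $M^n\times\mathbb{R}$ (equivalently, that $\partial_s$ is parallel and that the $\mathbb{R}$-direction is flat) is essential. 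A cleaner alternative I would also consider is to invoke the first variation formulas already proved in Lemma \ref{lemma3.1}: applying the normal variation with speed $\phi\equiv$ (a constant times a cutoff) identifies $\Delta\Theta-(|A|^2+\overline{\mathrm{Ric}}(\vec v,\vec v))\Theta$ with $\partial H/\partial r$ along the flow generated by translation in the $s$-direction, and since that flow is an ambient isometry only up to the conformal factor, its mean-curvature variation is exactly $\langle\nabla H,\partial_s\rangle$ — but I would fall back on the direct computation above if the variational shortcut requires hypotheses ($H$ constant, etc.) not available for a general $C^2$ hypersurface $S$.
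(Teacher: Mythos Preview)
Your approach is the paper's approach: differentiate $\Theta=\langle\vec v,\partial_s\rangle$ twice using that $\partial_s$ is parallel, then invoke Codazzi. Two points of correction, however.

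First, the Lorentzian signs you flag are not merely cosmetic; they flip. With $\langle\vec v,\vec v\rangle=-1$ and the paper's convention $h_{ij}=-\langle\overline\nabla_{e_i}e_j,\vec v\rangle$, the Weingarten relation reads $\overline\nabla_{e_i}\vec v=+h_{ik}e_k$, not $-h_{ik}e_k$. Hence $e_i\Theta=+h_{ik}\langle\partial_s,e_k\rangle$, and after one more derivative at a point where $\nabla_{e_i}e_j=0$ you obtain $\Delta\Theta=h_{ik,i}\langle\partial_s,e_k\rangle+|A|^2\Theta$ with plus signs throughout, giving \eqref{c-7} directly.

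Second, you are overcomplicating the curvature step. There are not three curvature contributions: the Ricci identity \eqref{Ricci} plays no role, and commuting $e_k$ past $\langle e_j,\partial_s\rangle$ produces only $h_{kj}\Theta$ (pure second fundamental form, no curvature). The single curvature term comes from Codazzi, $h_{ik,i}=H_{,k}+\overline R_{0iki}$, and contracting with $\langle\partial_s,e_k\rangle$ yields $\langle\nabla H,\partial_s\rangle+\overline{\mathrm{Ric}}(\vec v,\partial_s^\top)$. Now use the decomposition $\partial_s^\top=\partial_s+\Theta\vec v$ together with the product structure, which gives $\overline{\mathrm{Ric}}(\vec v,\partial_s)=0$ (since $\partial_s$ is parallel, $\overline R(\cdot,\cdot)\partial_s=0$). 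This leaves exactly $\Theta\,\overline{\mathrm{Ric}}(\vec v,\vec v)$, and the lemma follows. No contraction of $\overline R_{0ij0}$ against $h_{ij}$ occurs.
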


\begin{proof}
Fix a point $p\in S$. Suitably choose an orthonormal frame field
$\{e_{1},e_{2},\ldots,e_{n}\}$ on $S$ such that
$\nabla_{e_{i}}e_{j}(p)=0$ and $\langle
e_{i},e_{j}\rangle=\delta_{ij}$. Then
$\overline{\nabla}_{e_{i}}e_{j}(p)=h_{ij}\vec{v}$, where, following
the convention used in Section \ref{sect2}, $\overline{\nabla}$
denotes the covariant derivative of the ambient space
$M^{n}\times\mathbb{R}$ and $\vec{v}$ is the unit normal vector of
$S$. It is easy to know that for any smooth vector field $X$,
$\overline{\nabla}_{X}\partial_{s}=0$. By direct calculation, one
has
\begin{eqnarray} \label{c-8}
\Delta\Theta(p)&=\nabla_{e_{i}}\nabla_{e_{i}}\langle\partial_{s},\vec{v}\rangle-\nabla_{\nabla_{e_{i}}e_{i}}\Theta(p)\nonumber\\
&=
e_{i}\langle\partial_{s},h_{ik}e_{k}\rangle(p)\nonumber\\
&= h_{ik,i}\langle\partial_{s},e_{k}\rangle+|A|^{2}\Theta.
\end{eqnarray}
Using the Codazzi equation (\ref{Codazzi}) directly yields
\begin{eqnarray*} \label{c-9}
h_{ik,i}=h_{ii,k}+\overline{R}_{0iki}.
\end{eqnarray*}
Hence, it gives
\begin{eqnarray} \label{c-10}
h_{ik,i}\langle\partial_{s},e_{k}\rangle=\langle\nabla
H,\partial_{s}\rangle+\overline{\mathrm{Ric}}\left(\vec{v},\langle\partial_{s},e_{k}\rangle
e_{k}\right).
\end{eqnarray}
Since $\overline{\nabla}_{X}\partial s=0$ for any vector $X$, we
know
\begin{eqnarray*} \label{c-11}
\langle\partial_{s},e_{k}\rangle e_{k}=\partial_{s}+\Theta\vec{v}
\end{eqnarray*}
and $\overline{\mathrm{Ric}}(\vec{v},\partial_{s})=0$. Putting these
two facts into (\ref{c-10}) implies
\begin{eqnarray*}
h_{ik,i}\langle\partial_{s},e_{k}\rangle=\langle\nabla
H,\partial_{s}\rangle+\overline{\mathrm{Ric}}(\vec{v},\vec{v})\Theta.
\end{eqnarray*}
The assertion of this lemma follows by combing the above equality
with \eqref{c-8} directly.
\end{proof}

Let us go back to the proof of Theorem \ref{THEOREM3.1}.  Since
$\Sigma$ is a translating space-like graph in the Lorentz
$(n+1)$-manifold $M^{n}\times\mathbb{R}$, one has $H=-c\Theta$, and then
\eqref{c-7} can be rewritten as
\begin{eqnarray*} \label{c-12}
L\Theta=0.
\end{eqnarray*}
Therefore \eqref{c-6} becomes
\begin{eqnarray*} \label{c-13}
\phi
L\phi=\eta\Theta\left(\Theta\Delta\eta+2\langle\nabla\eta,\nabla\Theta\rangle+c\Theta\langle\nabla\eta,\partial_{s}\rangle\right).
\end{eqnarray*}
On the other hand, the divergence of $\eta\Theta^{2}\nabla\eta
e^{cs}$ is
\begin{eqnarray} \label{c-14}
\begin{split}
\mathrm{div}\left(\eta\Theta^{2}\nabla\eta e^{cs}\right)&=\eta\Theta e^{cs}\left(\Theta\Delta\eta+2\langle\nabla\eta,\nabla\Theta\rangle+c\Theta\langle\nabla\eta,\partial_{s}\rangle\right)+\Theta^{2}|\nabla\eta|^{2}e^{cs}\\
&= \phi e^{cs}L\phi+\Theta^{2}|\nabla\eta|^{2}e^{cs}.
\end{split}
\end{eqnarray}
Combining (\ref{c-14}) with \eqref{c-5} and applying the divergence
theorem result in
\begin{eqnarray*}
\frac{\partial^{2}F(\Sigma_{r})}{\partial^{2}r}\Bigg{|}_{r=0}=-\int_{\Sigma}\Theta^{2}|\nabla\eta|^{2}e^{cs}d\mu\leq0.
\end{eqnarray*}
Then we conclude that the translating space-like graph $\Sigma$ is
stable and maximal in $\widetilde{M^{n}\times\mathbb{R}}$.
\end{proof}

Applying Lemma \ref{LEMMA3.2}, we can obtain the following
interesting rigidity result.

\begin{theorem}\label{THEOREM3.3}
Let $\{\Sigma_{n}\}_{n=1}^{\infty}$ be a sequence of smooth
connected space-like graphs in the Lorentz $(n+1)$-manifold
$M^{n}\times\mathbb{R}$ with diameter $\varrho$ converging uniformly
to a connected space-like hypersurface $\Sigma$ in the $C^{2}$
sense. If all $\Sigma_{n}$ are translating space-like graphs in the
interior of $\Sigma$, the angle function $\Theta$ satisfies that
$\Theta<0$ or $\Theta\equiv0$. The conclusion is also true in the
case of maximal or CMC space-like graphs.
\end{theorem}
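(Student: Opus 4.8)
The plan is to show that the limit hypersurface $\Sigma$ is itself a translating (respectively CMC, maximal) space-like graph, to apply Lemma \ref{LEMMA3.2} to it, and then to conclude from the strong maximum principle. For the first step, note that equation \eqref{trg} is quasilinear and, on any region where $|Du|\le\theta<1$, uniformly elliptic; the hypothesis that $\Sigma_k\to\Sigma$ in $C^2$ confines the graph functions $u_k$ together with their gradients to a fixed compact region of $M^n\times\mathbb{R}$ on which $|Du_k|\le\theta<1$, and, evaluating \eqref{trg}, forces the speeds $c_k$ to converge to some finite $c$. Interior elliptic (Schauder) estimates then upgrade the $C^2$ convergence of the $u_k$ to $C^\infty_{\mathrm{loc}}$ convergence, so the interior of $\Sigma$ is a smooth translating space-like graph with speed $c$; in the CMC and maximal cases one argues the same way, with $c_k$ replaced by the (constant) mean curvatures, or by $c_k\equiv0$.

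With this in hand, since $\Sigma$ is a translating space-like graph we have $H=-c\Theta$ by \eqref{c-1}, hence $\langle\nabla H,\partial_s\rangle=-c\langle\nabla\Theta,\partial_s\rangle$; substituting into the identity \eqref{c-7} of Lemma \ref{LEMMA3.2} gives
\[
L\Theta:=\Delta\Theta-\bigl(|A|^2+\overline{\mathrm{Ric}}(\vec v,\vec v)\bigr)\Theta+c\langle\nabla\Theta,\partial_s\rangle=0
\]
on the interior of $\Sigma$. In the CMC case $\nabla H=0$ and in the maximal case $H=0$, so the drift term disappears and one obtains $\Delta\Theta-(|A|^2+\overline{\mathrm{Ric}}(\vec v,\vec v))\Theta=0$. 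In every case $L$ is a linear second-order operator on the interior of $\Sigma$ that is uniformly elliptic with smooth, locally bounded coefficients.

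To finish, observe that each $\Sigma_k$, being a space-like graph, has angle function $\Theta_k=\langle\vec v_k,\partial_s\rangle<0$, so the limit satisfies $\Theta\le0$. Setting $v:=-\Theta\ge0$ we have $Lv=0$ by linearity. The Harnack inequality for nonnegative solutions of uniformly elliptic second-order equations with bounded coefficients — which requires no sign condition on the zeroth-order coefficient — yields $\sup_K v\le C\inf_K v$ on every compact set $K$ contained in the interior of $\Sigma$. Since $\Sigma$ is connected it follows that either $v>0$ throughout the interior, that is $\Theta<0$, or $v$ vanishes at some interior point, in which case $v\equiv0$ and hence $\Theta\equiv0$. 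This is exactly the asserted dichotomy, and the CMC and maximal cases are handled identically using the operator $L$ above (with $c=0$).

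I expect the real work to lie in the first step, namely in verifying that the stated hypotheses genuinely force $\Sigma$ to be a translating (CMC, maximal) space-like graph: one must check that $|Du_k|$ stays bounded away from $1$ and that the speeds $c_k$ stay bounded on a fixed compact region, so that interior elliptic estimates apply and the limit equation is non-degenerate. The hypotheses ``diameter $\varrho$'' and ``$C^2$ convergence'' are precisely what is used here. Granting that, the remainder is a direct application of Lemma \ref{LEMMA3.2} together with the maximum principle.
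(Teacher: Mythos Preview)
Your argument is correct and follows essentially the same route as the paper: apply Lemma~\ref{LEMMA3.2} to obtain a linear elliptic equation (or inequality) for $\Theta$, observe $\Theta\le 0$ in the limit, and conclude the dichotomy from a strong maximum principle. There are, however, two organizational differences worth noting.

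First, you spend effort showing that the limit $\Sigma$ is itself a translating (resp.\ CMC, maximal) graph, bootstrapping the regularity and extracting a limiting speed $c$, in order to apply Lemma~\ref{LEMMA3.2} directly on $\Sigma$ and obtain the \emph{equation} $L\Theta=0$. The paper bypasses this: it applies Lemma~\ref{LEMMA3.2} on each $\Sigma_n$, uses the uniform $C^2$ bounds to dominate $|A|^2$ and $\overline{\mathrm{Ric}}(\vec v,\vec v)$ by constants $\beta_1,\beta_2$, and thereby obtains on each $\Sigma_n$ a differential \emph{inequality} of the form $\Delta\Theta + b\cdot\nabla\Theta - (\beta_1+\beta_2)\Theta \ge 0$, which survives passage to the $C^2$ limit. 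This is quicker but less informative; your route gives the sharper statement that $\Sigma$ is itself a translating graph.

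Second, you invoke the Harnack inequality for nonnegative solutions of $Lv=0$ (no sign needed on the zeroth-order coefficient), whereas the paper, having only an inequality, uses the strong maximum principle in the form that requires $c\le 0$ --- which is exactly why it replaces the variable coefficient $|A|^2+\overline{\mathrm{Ric}}(\vec v,\vec v)$ by the constant upper bound $\beta_1+\beta_2$. Both conclusions are standard and yield the same dichotomy.
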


\begin{proof}
Without loss of generality, we assume $\Theta<0$. By continuity, we
know that  in the interior of all $\Sigma_{n}$, $|A|^{2}<\beta_{1}$
holds for some positive constant $\beta_{1}$ depending only on
$M^{n}$.

Now, first, we assume that $\Sigma_{n}$ are maximal or CMC
space-like graphs. Then $\nabla H\equiv0$. By Lemma \ref{LEMMA3.2},
we have
\begin{eqnarray} \label{c-15}
\Delta\Theta-\left(|A|^{2}+\overline{\mathrm{Ric}}(\vec{v},\vec{v})\right)\Theta=0
\end{eqnarray}
on all $\Sigma_{n}$. Since
\begin{eqnarray*}
\overline{\mathrm{Ric}}(\vec{v},\vec{v})=\frac{u_{,k}^{2}(\Gamma_{kk,i}^{i}+\Gamma_{kk}^{l}\Gamma_{il}^{i}-\Gamma_{ik,k}^{i}-\Gamma_{ik}^{l}\Gamma_{kl}^{i})}{1-|Du|^{2}},
\quad i,k,l=1,2,\ldots,n,
\end{eqnarray*}
there exists a positive constant $\beta_{2}$ only depending on
$M^{n}$ such that
$\overline{\mathrm{Ric}}(\vec{v},\vec{v})\leq\beta_{2}$ in the
interior of all $\Sigma_{n}$. By (\ref{c-15}) we have
$\Delta\Theta\geq\left(\beta_{1}+\beta_{2}\right)\Theta$ on all
$\Sigma_{n}$. Because $\Sigma$ is the $C^{2}$ uniform limit of
$\Sigma_{n}$ as $n\rightarrow\infty$, it follows that $\Theta\leq0$
and $\Delta\Theta\geq(\beta_{1}+\beta_{2})\Theta$. By the strong
maximum principle of second-order elliptic equations, we can obtain
that $\Theta\equiv0$ or $\Theta<0$ on $\Sigma$.

Second, assume that $\Sigma_{n}$ are translating space-like graphs.
Then $H\equiv-c\Theta$ by \eqref{c-1}. Similar argument gives
\begin{eqnarray*}
\Delta\Theta\geq(\beta_{1}+\beta_{2})\Theta-c\left\langle\nabla\Theta,\partial_{s}\right\rangle
\end{eqnarray*}
on all $\Sigma_{n}$. Based on the strong maximum principle and the
fact that $\Theta\leq0$ on $\Sigma$, we also have $\Theta\equiv0$ or
$\Theta<0$ on $\Sigma$.
\end{proof}

\section{Examples of translating space-like graphs}
\renewcommand{\thesection}{\arabic{section}}
\renewcommand{\theequation}{\thesection.\arabic{equation}}
\setcounter{equation}{0} \label{sect4}

In this section, we construct some examples of translating
space-like graphs to MCF when the hypersurface $M^{n}$ has a domain
with certain warped product structure.

Suppose that $M^{n}$ is an $n$-dimensional ($n\geq2$) complete
Riemannian manifold with a metric $\sigma$ containing a domain
$M_{0}^{n}$ equipped with the following coordinate system:
\begin{eqnarray} \label{f-1}
\left\{\theta=(\theta_{2},\theta_{3},\ldots,\theta_{n})\in
\mathbb{S}^{n-1},
r\in[0,r_{0})\right\}~~~~\mathrm{with}~~~~\sigma=dr^{2}+h^{2}(r)d\theta^{2},
\end{eqnarray}
where $d\theta^{2}$ is the round metric on the unit $(n-1)$-sphere
$\mathbb{S}^{n-1}$,
 $h(r)$ is a
positive function satisfying $h(0)=0$, $h'(0)=1$ with $h'(r)\neq0$
for all $r\in(0,r_{0})$.

Now, with the help of examples constructed below, we can somehow
show the existence of translating space-like graphs in the Lorentz
$(n+1)$-manifold $M^{n}\times\mathbb{R}$ with the structure
\eqref{f-1} and the metric $\overline{g}$.

\begin{theorem}\label{THEOREM6.1}
Let $M^{n}$ be a complete Riemannian $n$-manifold mentioned above.
Let $u(r):[0,r_{0})\rightarrow\mathbb{R}$ be a $C^{2}$ solution of
the following ordinary differential equation (ODE for short)
\begin{eqnarray} \label{f-2}
\frac{u_{rr}}{1-u_{r}^{2}}+(n-1)\frac{h'(r)}{h(r)}u_{r}=c,
\end{eqnarray}
with $u_{r}(0)=0$ for $r\in[0,r_{0})$ and $|u_{r}|<1$. Then
$\Sigma=(x,u(r))$ for $r\in[0,r_{0})$ is a translating space-like
graph in the Lorentz $(n+1)$-manifold $M^{n}\times\mathbb{R}$, where
$x=(r,\theta)\in M_{0}^{n}$ given by \eqref{f-1}. If $r_{0}=\infty$,
then $\Sigma$ is complete.
\end{theorem}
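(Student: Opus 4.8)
The plan is to verify directly that the radial function $u(r)$ produced by the ODE \eqref{f-2} gives a space-like graph satisfying the translating equation \eqref{trg}, and then to address completeness when $r_0=\infty$. First I would recall that on the warped-product domain $M_0^n$ with metric $\sigma=dr^2+h^2(r)d\theta^2$, for a function depending only on $r$ one has $Du=u_r\partial_r$ and $|Du|^2=u_r^2$, so the hypothesis $|u_r|<1$ immediately guarantees that the graph $\Sigma=(x,u(r))$ is space-like (the induced metric $\sigma-du^2=(1-u_r^2)dr^2+h^2d\theta^2$ is positive definite). The main computation is to expand the divergence operator in these coordinates: for a radial vector field $f(r)\partial_r$ on $(M_0^n,\sigma)$ one has $\mathrm{div}(f\partial_r)=\frac{1}{h^{n-1}}\partial_r\!\left(h^{n-1}f\right)=f_r+(n-1)\frac{h'}{h}f$. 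Applying this with $f=\dfrac{u_r}{\sqrt{1-u_r^2}}$ and noting that $\dfrac{d}{dr}\dfrac{u_r}{\sqrt{1-u_r^2}}=\dfrac{u_{rr}}{(1-u_r^2)^{3/2}}$, the left side of \eqref{trg} becomes $\dfrac{1}{\sqrt{1-u_r^2}}\left(\dfrac{u_{rr}}{1-u_r^2}+(n-1)\dfrac{h'}{h}u_r\right)$, which equals $\dfrac{c}{\sqrt{1-u_r^2}}$ precisely when \eqref{f-2} holds. Hence $u$ solves \eqref{trg} and $\Sigma$ is a translating space-like graph by definition.

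A point deserving a little care is the behavior at the origin $r=0$, where the coordinate system degenerates ($h(0)=0$). Here I would use the initial condition $u_r(0)=0$ together with $h'(0)=1$: near $r=0$ the term $(n-1)\frac{h'(r)}{h(r)}u_r$ has the indeterminate form $\frac{0}{0}$, but L'Hôpital (or a Taylor expansion $h(r)=r+o(r)$, $u_r(r)=u_{rr}(0)r+o(r)$) shows it tends to $(n-1)u_{rr}(0)$, so \eqref{f-2} at $r=0$ reduces to the regular condition $n\,u_{rr}(0)=c$, consistent with the smoothness of the graph across the pole of the warped metric. This confirms that the $C^2$ solution $u$ genuinely extends to a smooth graph over a neighborhood of the center point of $M_0^n$, not merely over the punctured region $r\in(0,r_0)$.

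For the completeness claim when $r_0=\infty$, I would argue that the induced metric $g=(1-u_r^2)dr^2+h^2(r)d\theta^2$ on $\Sigma$ is complete. The radial distance from the origin along $\Sigma$ is $\int_0^R\sqrt{1-u_r^2(r)}\,dr$; since $0\le 1-u_r^2\le 1$, this is bounded above by $R$, but for completeness one needs it to \emph{diverge} as $R\to\infty$. This is the step I expect to be the main obstacle: without a uniform bound $|u_r|\le 1-\delta$ away from a compact set, the integral could in principle converge. I would handle it by extracting such a bound from the ODE itself — rewriting \eqref{f-2} as $u_{rr}=(1-u_r^2)\bigl(c-(n-1)\frac{h'}{h}u_r\bigr)$ and analyzing the sign of $u_{rr}$ to show $u_r$ cannot approach $\pm 1$ in finite $r$, hence stays in a compact subinterval of $(-1,1)$ on any $[0,R]$; combined with any lower bound on $h$ (e.g. $h$ eventually bounded below by a positive constant, which follows from $h'\ne0$ on $(0,\infty)$ forcing $h$ monotone increasing), one gets that escaping to infinity in $\Sigma$ requires $r\to\infty$, so Cauchy sequences converge and $\Sigma$ is complete. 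If the paper's intended argument is simply that the graph is defined over all of $M_0^n$ and $M_0^n$ is itself complete when $r_0=\infty$, I would instead invoke that the projection $\Sigma\to M_0^n$ is distance-nonincreasing in reverse — i.e. $g\ge(1-\sup u_r^2)\sigma$ off a compact set — to transfer completeness of $(M_0^n,\sigma)$ to $\Sigma$.
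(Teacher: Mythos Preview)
Your argument is correct. The route differs from the paper's in one respect: where you verify the translating equation by computing the divergence $\mathrm{div}\!\left(\dfrac{Du}{\sqrt{1-|Du|^2}}\right)$ directly via the warped-product formula $\mathrm{div}(f\partial_r)=f_r+(n-1)\tfrac{h'}{h}f$, the paper instead writes down the frame $\{e_1=\partial_r+u_r\partial_s,\ e_i=\partial_{\theta_i}\}$ on $\Sigma$, computes the induced metric and the second fundamental form components $h_{11}=\dfrac{u_{rr}}{\sqrt{1-u_r^2}}$, $h_{ii}=\dfrac{h'h\,u_r}{\sqrt{1-u_r^2}}$, contracts to get $H=\dfrac{c}{\sqrt{1-u_r^2}}$, and then checks $\Theta=-\dfrac{1}{\sqrt{1-u_r^2}}$ so that $H=-c\Theta$. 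The two computations are of course equivalent in light of the identity $H=\mathrm{div}\!\left(\dfrac{Du}{\sqrt{1-|Du|^2}}\right)$ recorded earlier in the paper; your version is shorter and avoids ever writing $h_{ij}$. On the remaining two points you are actually more careful than the paper: the paper does not discuss the coordinate singularity at $r=0$ at all, and its completeness argument consists solely of the sentence ``If $r_0=\infty$, then $M_0^n$ is simply connected and should be a whole $M^n$. Thus $\Sigma$ is complete,'' without addressing the concern you raise about the induced radial length $\int_0^\infty\!\sqrt{1-u_r^2}\,dr$. Your ODE-based sketch for bounding $|u_r|$ away from $1$ is the honest way to close that gap.
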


\begin{remark}\label{LEMARK6.2}
\rm{Clearly, \eqref{f-2} is a second-order ODE whose component of
the second-order derivative term does not degenerate under the
assumption $|u_{r}|<1$. The existence of its solution is obvious.}
\end{remark}

\begin{proof}
If $r_{0}=\infty$, then $M_{0}^{n}$ is simply connected and should
be a whole $M^{n}$. Thus $\Sigma$ is complete.

In the rest part, we show that $\Sigma$ is a translating space-like
graph. By \eqref{c-1}, we know that here it is sufficient to derive
the identity
\begin{eqnarray} \label{f-3}
H=-c\Theta,
\end{eqnarray}
where $H$ is the mean curvature of $\Sigma$ and $\vec{v}$ is its
upward normal vector.

Fix a point $(x,u(x))$ on $\Sigma$, where $x\in M_{0}^{n}$ and the
polar coordinate of $x$ in $M_{0}^{n}$ is not $(0,0,\ldots,0)$.
Clearly, the polar coordinate system on $M_{0}^{n}$ given by
\eqref{f-1} determines a frame field
$\{\partial_{r},\partial_{\theta_{2}},\ldots,\partial_{\theta_{n}}\}$
naturally. For the space-like graph $\Sigma$ determined by
$u(x)=u(r)$ in the Lorentz $(n+1)$-manifold $M^{n}\times\mathbb{R}$,
denote by $u_{r}$ and $u_{\theta_{i}}$, $i=2,3,\ldots,n$, the
partial derivatives of $u$. Since here $u(r)$ is a radial function,
$u_{\theta_{i}}\equiv0$, $i=2,3,\ldots,n$. Therefore, on $\Sigma$, a
natural frame $\{e_{1}=\partial r+u_{r}\partial_{s},
e_{i}=\partial_{\theta_{i}}\}$, $i=2,\ldots,n$ can be obtained,
where, as before, $\partial_{s}$ denotes the vector field tangent to
$\mathbb{R}$. Then the Riemannian metric on $\Sigma$ and the upward
unit normal vector of $\Sigma$ are given by
\begin{eqnarray*}
g_{11}=\left\langle
e_{1},e_{1}\right\rangle=1-u_{r}^{2},~~~~g_{kl}=g_{lk}=\left\langle
e_{l},e_{k}\right\rangle=0,~~~~k\neq l,
\end{eqnarray*}
\begin{eqnarray*}
g_{ii}=\left\langle e_{i},e_{i}\right\rangle=h^{2}(r),~~~~i=2,\ldots,n,
\end{eqnarray*}
and
\begin{eqnarray*}
\vec{v}=\frac{\partial_{s}+u_{r}\partial_{r}}{\sqrt{1-u_{r}^{2}}}.
\end{eqnarray*}
By direct calculation, its second fundamental forms are
\begin{eqnarray*}
h_{11}=-\left\langle\overline{\nabla}_{e_{1}}e_{1},\vec{v}\right\rangle=\frac{u_{rr}}{\sqrt{1-u_{r}^{2}}},
\end{eqnarray*}
and
\begin{eqnarray*}
h_{ii}=-\left\langle\overline{\nabla}_{e_{i}}e_{i},\vec{v}\right\rangle=-\left\langle-h(r)h'(r)\partial_{r},\vec{v}\right\rangle=\frac{h'(r)h(r)u_{r}}{\sqrt{1-u_{r}^{2}}},~~~~i=2,\ldots,n.
\end{eqnarray*}
where we use the fact
\begin{eqnarray*}
\left\langle\overline{\nabla}_{e_{i}}e_{i},\partial
r\right\rangle=-h'(r)h(r),~~~~i=2,\ldots,n.
\end{eqnarray*}
Then, by \eqref{f-2}, the mean curvature of $\Sigma$ with respect to
$\vec{v}$ is
\begin{eqnarray*}
H=g^{11}h_{11}+g^{22}h_{22}+\ldots+g^{nn}h_{nn}=\frac{1}{\sqrt{1-u_{r}^{2}}}\left(\frac{u_{rr}}{1-u_{r}^{2}}+(n-1)\frac{h'(r)}{h(r)}u_{r}\right)
=\frac{c}{\sqrt{1-u_{r}^{2}}}.
\end{eqnarray*}
 On the other hand, we have
\begin{eqnarray*}
\Theta=\left\langle\vec{v},\partial_{s}\right\rangle=
\left\langle\frac{\partial_{s}+u_{r}\partial_{r}}{\sqrt{1-u_{r}^{2}}},\partial_{s}\right\rangle=-\frac{1}{\sqrt{1-u_{r}^{2}}}.
\end{eqnarray*}
Hence in our case here we have $H=-c\Theta$, which implies $\Sigma$
is a translating space-like graph. The proof is finished.
\end{proof}

\section*{Acknowledgments}
\renewcommand{\thesection}{\arabic{section}}
\renewcommand{\theequation}{\thesection.\arabic{equation}}
\setcounter{equation}{0} \setcounter{maintheorem}{0}

This research was supported in part by the NSF of China (Grant Nos.
11801496 and 11926352), the Fok Ying-Tung Education Foundation
(China), and Hubei Key Laboratory of Applied Mathematics (Hubei
University).

\end{document}